\newtheorem{theorem}{Theorem}[] %
\newtheorem{lemma}[theorem]{Lemma} %
\newtheorem{proposition}[theorem]{Proposition} %
{\theoremstyle{remark} %
  \newtheorem{remark}[theorem]{Remark}} %
{\theoremstyle{definition} %
}
\newcommand{\CC}[0]{\ensuremath{\mathbb{C}}}
\newcommand{\RR}[0]{\ensuremath{\mathbb{R}}}
\newcommand{\QQ}[0]{\ensuremath{\mathbb{Q}}}
\newcommand{\ZZ}[0]{\ensuremath{\mathbb{Z}}}
\newcommand{\KK}[0]{\ensuremath{\mathrm{k}}}
\newcommand{\TT}[0]{\ensuremath{\mathrm{T}}}
\newcommand{\GM}[0]{\ensuremath{\mathbb{G}_{\mathrm{m}}}}
\newcommand{\GA}[0]{\ensuremath{\mathbb{G}_{\mathrm{a}}}}
\newcommand{\RT}[0]{\ensuremath{\mathcal{R}}}
\newcommand{\OO}[0]{\ensuremath{\mathcal{O}}}
\newcommand{\id}[0]{\ensuremath{\operatorname{id}}}
\newcommand{\aut}[0]{\ensuremath{\operatorname{Aut}}}
\newcommand{\homo}[0]{\ensuremath{\operatorname{Hom}}}
\newcommand{\spec}[0]{\ensuremath{\operatorname{Spec}}}
\newcommand{\lie}[0]{\ensuremath{\operatorname{Lie}}}
\newcommand{\Span}[0]{\ensuremath{\operatorname{span}}}
\title{Automorphisms of products of toric varieties}
\author{Alvaro Liendo} %
\address{Instituto de Matem\'atica y F\'isica, Universidad de Talca,
  Casilla 721, Talca, Chile} %
\email{aliendo@inst-mat.utalca.cl}
\thanks{{\it 2020 Mathematics Subject
    Classification}: 14M25.\\
  \mbox{\hspace{11pt}}{\it Key words}: complete toric varieties, automorphism groups.}
\author{Giancarlo Lucchini Arteche}
\address{Departamento de Matem\'aticas, Facultad de Ciencias, Universidad de Chile, Las Palmeras 3425, \~Nu\~noa, Santiago, Chile.}
\email{luco@uchile.cl}
\date{}
\begin{document}

\begin{abstract}
We give an explicit description of the automorphism group of a product of complete toric varieties over an arbitrary field in terms of the respective automorphism groups of its components. More precisely, we prove that, up to permutation of isomorphic components, an automorphism of a product corresponds to a product of automorphisms of its components. We also reprove, in modern language, the classic result by Demazure describing the group-scheme of automorphisms of a complete toric variety over an arbitrary field.
\end{abstract}

\maketitle

\section*{Introduction}

Let $\KK$ be a field. A \emph{variety} is a geometrically integral scheme of finite type over $\KK$. A (split) algebraic torus $\TT$ is an algebraic group isomorphic to $\GM^n$ for some integer $n$. A \emph{toric variety} $X$ is a normal variety having $\TT$ as an open set such that the action of $\TT$ on itself by translations extends to an action on $X$. We say that a toric variety $X$ is \emph{decomposable} if it can be written as a product $X=X_1\times X_2$ of two toric varieties of strictly smaller dimension. Otherwise we say that $X$ is \emph{indecomposable}.

In this note, we consider complete (i.e.~proper) toric varieties. If $X$ is such a variety, then its sheaf of automorphisms is represented by a locally algebraic group denoted by $\aut_X$ (cf.~\cite[Thm.~3.7]{MO}). Define $\aut^0_X$ as the neutral component of $\aut_X$ (it is thus an algebraic group). The aim of this short note is to prove the following theorem.

\begin{theorem}\label{main thm}
Let $X_1,\ldots,X_n$ be pairwise non-isomorphic indecomposable complete toric varieties.
For $1\leq i\leq n$, let $r_i$ be a positive integer. Let $X=\prod_{i=1}^n X_{i}^{r_i}$. Then
\[\aut_X\simeq \prod_{i=1}^n(\aut_{X_i}^{r_i}\rtimes S_{r_i}),\]
where $S_{r_i}$ is seen as a finite constant group-scheme acting on $\aut_{X_i}^{r_i}$ by permuting coordinates.
\end{theorem}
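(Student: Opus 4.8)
The plan is to reduce the computation of $\aut_X$ to two independent facts: first, that automorphisms of a product of toric varieties must respect the product structure up to permutation of isomorphic factors; and second, that identifying such an automorphism amounts to choosing a permutation of the isomorphic copies together with an automorphism of each factor. The key structural input is that for complete toric varieties the decomposition into indecomposable factors is essentially unique—this is the toric analogue of unique factorization, which I expect to follow from the combinatorics of the fan (a complete fan splits as a product exactly when the lattice splits compatibly with the cones, and the indecomposable pieces are intrinsic).

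**First I would** establish the key lemma that an arbitrary automorphism $\varphi$ of $X=\prod_i X_i^{r_i}$ permutes the indecomposable factors. The natural strategy is to work at the level of the neutral component $\aut^0_X$ and its maximal torus: since $X$ is a toric variety, one can recover the factors from the geometry of the boundary divisors and the torus action. Concretely, I would use the fact—to be proven beforehand—that $\aut^0_X \simeq \prod_i (\aut^0_{X_i})^{r_i}$, so that the maximal torus and the combinatorial data (the fan of $X$, which is the product of the fans of the factors) are determined intrinsically. An automorphism $\varphi$ conjugates a maximal torus to a maximal torus, hence acts on the fan after composing with a suitable element of $\aut^0_X$; the product structure of the fan, together with uniqueness of the indecomposable decomposition, forces $\varphi$ to permute the factor fans, and this permutation can only identify isomorphic factors $X_i^{r_i}$ with themselves (permuting the $r_i$ copies).

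**The heart of the argument** is then to assemble the semidirect product. Given that $\varphi$ induces a permutation $\sigma \in \prod_i S_{r_i}$ of the copies and that, after undoing $\sigma$, the resulting automorphism preserves each factor and hence lies in $\prod_i (\aut_{X_i})^{r_i}$, I would exhibit the map $\aut_X \to \prod_i S_{r_i}$ sending $\varphi$ to its induced permutation. This is a homomorphism of group-schemes whose kernel is exactly $\prod_i (\aut_{X_i})^{r_i}$—the automorphisms preserving every factor—and which is split by the obvious inclusion of $\prod_i S_{r_i}$ as permutation automorphisms of the identical copies. The conjugation action of $S_{r_i}$ on $(\aut_{X_i})^{r_i}$ is by permuting coordinates, directly by construction, yielding the claimed semidirect product decomposition $\prod_i(\aut_{X_i}^{r_i}\rtimes S_{r_i})$. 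To make this work at the level of group-schemes rather than just $\KK$-points, I would verify the statement functorially over an arbitrary base $\KK$-algebra, noting that base change does not alter the fan and hence the permutation argument applies uniformly.

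**The main obstacle** I anticipate is proving that a single automorphism genuinely respects the product decomposition—that is, that there is no "mixing" automorphism sending, say, a point $(x_1,x_2)$ to something not of the form $(\psi_1(x_{\tau(1)}),\psi_2(x_{\tau(2)}))$. At the level of $\aut^0_X$ this is controlled by the structure of the fan, but the full group $\aut_X$ includes components disconnected from the identity, and an a priori wild automorphism need not obviously preserve the torus or its fan. The cleanest route is likely to show that any automorphism normalizes $\aut^0_X$, hence permutes its maximal tori, reducing to the torus-fixing case by composition with an inner automorphism; the subtlety is ensuring the permutation of factors that results is forced to respect the grouping into isomorphism classes, which is where pairwise non-isomorphism of the $X_i$ and the intrinsic nature of the indecomposable decomposition become essential.
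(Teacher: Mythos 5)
Your overall strategy coincides with the paper's: first compute $\aut^0_X$ as a product (your first step is exactly Proposition \ref{prop aut0}, which the paper proves by showing every Demazure root of a product fan lives in a single factor), then reduce an arbitrary automorphism modulo $\aut^0_X$ to one normalizing the maximal torus and hence to a toric automorphism (this is the content of Theorem \ref{thm Demazure}~\eqref{Dem3}, whose proof uses conjugacy of maximal tori and Rosenlicht's lemma, just as you sketch), and finally argue combinatorially that a fan automorphism permutes the indecomposable factors, with pairwise non-isomorphism forcing the permutation to respect the grouping into the blocks $X_i^{r_i}$. However, there is a genuine gap at the heart of your argument: you invoke ``uniqueness of the indecomposable decomposition'' and the claim that ``the indecomposable pieces are intrinsic'' as key structural input, but you only say you \emph{expect} this to follow from the combinatorics of the fan. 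That expectation is precisely where all the work lies, and the paper devotes a dedicated lemma to it (Lemma \ref{lemma injection}): for a toric morphism from an indecomposable complete $X$ into a product $X_1'\times X_2'$ whose fan map preserves skeleta, one splits the rays $\Sigma(1)=B_1\sqcup B_2$ according to which factor of the target 1-skeleton they map to, sets $N_{i,\RR}=\Span(B_i)$, and then---this is the nontrivial point---shows that every full-dimensional cone satisfies $\sigma=\sigma_1\times\sigma_2$, where $\sigma_i$ is cut out as a face of $\sigma$ by a supporting hyperplane coming from an interior point of $\sigma_j^\vee$; completeness then yields $\Sigma=\Sigma_1\times\Sigma_2$, contradicting indecomposability unless some $B_i$ is empty. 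Without an argument of this kind, your assertion that the product structure of the fan ``forces'' the permutation of factors is a restatement of the theorem, not a proof of it.

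A second, smaller but real, problem is your proposed scheme-theoretic upgrade: verifying the statement ``functorially over an arbitrary base $\KK$-algebra'' will not work as described, because an $A$-point of $\aut_X$ for a general $\KK$-algebra $A$ need not normalize any torus, so the reduction via conjugacy of maximal tori (and hence any action on the fan) is unavailable; that reduction is a statement about points over a field, and in fact the conjugacy argument requires an algebraically closed field. The paper instead gets the scheme-level statement from smoothness of $\aut_X$, which is part of Theorem \ref{thm Demazure}~\eqref{Dem2}: since both sides are smooth group-schemes with the same identity component by Proposition \ref{prop aut0}, it suffices to identify the (finite constant) component groups on $\overline{\KK}$-points, where Theorem \ref{thm Demazure}~\eqref{Dem3} makes every class representable by a toric automorphism and the fan argument applies. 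You should replace your functorial verification with this smoothness argument; with that substitution and a proof of the splitting lemma, your plan becomes the paper's proof.
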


The main ingredient in the proof of Theorem \ref{main thm} is Demazure's description of $\aut_X$ for a complete toric variety $X$, which is given in the foundational paper \cite{De}. Unfortunately, this description is stated for a \emph{smooth} complete toric variety $X$, even though the proofs (which are spread all over the article) naturally generalize to \emph{any} complete toric variety. Following Demazure's ideas, we give a short proof in modern language of such a description for an \emph{arbitrary} complete toric variety over any base field $\KK$ (for notations and definitions, see Section \ref{sec:toric}).

\begin{theorem}\label{thm Demazure}
Let $\Sigma$ be a complete fan and let $X$ be the corresponding complete toric variety. Let $\RT(\Sigma)$ be the set of Demazure roots of $\Sigma$. For $e\in\RT(\Sigma)$, let $\rho_e$ be the corresponding character and define the rational map
\begin{align} \label{eq:root-subgroup}
  \alpha_e\colon \GA\times \TT\dashrightarrow \TT,\quad \mbox{given by}\quad (s,t)\mapsto t\cdot \lambda_{\rho_e}(1+s\chi^e(t))\,.
\end{align}
\begin{enumerate}[$(i)$]
  \item The rational map $\alpha_e$ defines a faithful $\GA$-action on $X$. In particular, the morphism $\iota_e:\GA\to\aut_X$ induced by $\alpha_e$ is a closed immersion.\label{Dem1}
  \item The neutral component $\aut^0_X$ is spanned by $\TT$ and the image of $\iota_e$ for all $e\in \RT(\Sigma)$. In particular, $\aut_X$ is smooth.\label{Dem2}
  \item The quotient group-scheme $\aut_X/\aut^0_X$ is a finite constant group-scheme generated by the images of toric automorphisms.\label{Dem3}
  \end{enumerate}  
\end{theorem}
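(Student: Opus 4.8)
The plan for \eqref{eq:root-subgroup} and part \eqref{Dem1} is to verify the formula on the big torus first, then extend. Write $v$ for the primitive generator of the distinguished ray $\rho_e$, so that $\langle e,v\rangle=-1$ and $\langle e,v_\rho\rangle\ge 0$ for every ray $\rho\ne\rho_e$. On $\TT=\spec\KK[M]$ the comorphism of $\alpha_e(s,-)$ reads $\chi^m\mapsto\chi^m(1+s\chi^e)^{\langle m,v\rangle}$, and since $\chi^e(\lambda_{\rho_e}(u))=u^{\langle e,v\rangle}=u^{-1}$, a direct substitution gives $\alpha_e(0,-)=\id$ and $\alpha_e(s_1,\alpha_e(s_2,-))=\alpha_e(s_1+s_2,-)$; thus $\alpha_e$ is a $\GA$-action on $\TT$, faithful because $\alpha_e(s,-)=\id$ forces $s=0$. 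The real content is extending this to a regular action on $X$. The clean engine is the following chart computation: if $\sigma$ is a maximal cone with $\rho_e\in\sigma(1)$ and $m\in\sigma^\vee\cap M$, then $k:=\langle m,v\rangle\ge0$ and $m+je\in\sigma^\vee\cap M$ for $0\le j\le k$, because $\langle m+je,v_\rho\rangle=\langle m,v_\rho\rangle+j\langle e,v_\rho\rangle\ge0$ for $\rho\ne\rho_e$ and $\langle m+je,v\rangle=k-j\ge0$. Expanding $(1+s\chi^e)^{k}$ binomially then shows $\alpha_e^*\chi^m\in\KK[\sigma^\vee\cap M][s]$, so $\alpha_e$ restricts to a genuine $\GA$-action on each such $X_\sigma$, and hence on the invariant open $U=\bigcup_{\rho_e\in\sigma}X_\sigma$. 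I expect the main obstacle of \eqref{Dem1} to be covering the remaining charts: for $\sigma$ with $\rho_e\notin\sigma(1)$ one has $\chi^e\in\KK[\sigma^\vee\cap M]$, so $\alpha_e$ is already regular on $\{1+s\chi^e\ne0\}\cap(\GA\times X_\sigma)$, and one must check that the complement $\{1+s\chi^e=0\}$ maps into $U$, so that these opens together with the $\GA\times X_{\sigma'}$ ($\rho_e\in\sigma'(1)$) cover $\GA\times X$ and the maps glue; reducing to $\KK=\bar\KK$ (morphism-ness being geometric) streamlines this. Once $\alpha_e$ is a morphism $\GA\times X\to X$ it induces $\iota_e\colon\GA\to\aut_X$, whose differential sends the generator to the nonzero vector field $\partial_e\colon\chi^m\mapsto\langle m,v\rangle\chi^{m+e}$; being injective on points with injective differential and closed image, $\iota_e$ is a closed immersion.

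For \eqref{Dem2}, let $G'\subseteq\aut_X$ be the subgroup generated by $\TT$ and the $U_e:=\iota_e(\GA)$; it is closed and connected, so $G'\subseteq\aut^0_X$. The crux is Demazure's computation of global vector fields. Since $\aut_X$ represents the automorphism functor, $T_{\id}\aut_X=\h^0(X,\mathcal{T}_X)$, which is an $M$-graded $\TT$-module: a weight vector $\chi^m\otimes w$ (with $w\in N\otimes\KK$) lies in $\h^0(X,\mathcal{T}_X)$ iff it extends over every chart $X_\sigma$, i.e. $m+m'\in\sigma^\vee$ whenever $\langle m',w\rangle\ne0$. Running this condition over all maximal cones shows the weight-$0$ piece is exactly $\lie\TT=N\otimes\KK$ (dimension $d:=\dim X$) and that a nonzero weight $m$ contributes iff $m$ is a Demazure root, in which case the weight space is the line $\KK\,\partial_m$. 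Hence $\dim_\KK\h^0(X,\mathcal{T}_X)=d+\#\RT(\Sigma)$. Now $\lie G'\supseteq\lie\TT+\sum_e\KK\,\partial_e$, a space of dimension $d+\#\RT(\Sigma)$ (the summands are linearly independent as they sit in distinct weights), while $\lie G'\subseteq\lie\aut_X=\h^0(X,\mathcal{T}_X)$; therefore $\lie G'=\h^0(X,\mathcal{T}_X)$. On the other hand the multiplication map $\mu\colon\TT\times\prod_e U_e\to G'$ has differential at the identity with image $\lie\TT+\sum_e\KK\,\partial_e$, so $\dim G'\ge\operatorname{rank}d\mu_{\id}=d+\#\RT(\Sigma)$. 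Combining with $\dim G'\le\dim_\KK\lie G'$ gives the chain $d+\#\RT(\Sigma)\le\dim G'\le\dim\aut^0_X\le\dim_\KK T_{\id}\aut_X=d+\#\RT(\Sigma)$, forcing equality throughout. Thus $\aut_X$ is smooth (its dimension equals that of its tangent space at $\id$) and $G'=\aut^0_X$, as both are closed, connected, and smooth of the same dimension.

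For \eqref{Dem3}, I would analyse the normalizer $N:=N_{\aut_X}(\TT)$. Since $\aut_X$ is smooth by \eqref{Dem2}, $\aut_X/\aut^0_X$ is finite étale, and I may compute it over $\bar\KK$ with its Galois action. Any $\phi\in\aut_X(\bar\KK)$ normalizes $\aut^0_X$, so $\phi\TT\phi^{-1}$ is again a maximal torus of $\aut^0_X$; by conjugacy of maximal tori there is $g\in\aut^0_X$ with $g\phi\in N$, whence $\aut_X=\aut^0_X\cdot N$ and the component group is a quotient of $N/(N\cap\aut^0_X)$. An element of $N$ induces an automorphism $c_\phi$ of $\TT$ as an algebraic group, i.e. an element of $\mathrm{GL}(N)$; because $\phi$ permutes the $\TT$-orbits of $X$, the orbit-cone correspondence shows $c_\phi$ preserves the fan, so we obtain a homomorphism $N\to\aut(N,\Sigma)$ with kernel the translations $\TT$ and image all of $\aut(N,\Sigma)$ (each fan automorphism lifts to a toric automorphism of $X$). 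Thus $N/\TT\cong\aut(N,\Sigma)$, which is finite since it embeds into the permutation group of the finitely many rays. Consequently $\aut_X/\aut^0_X$ is finite, every connected component meets $N$ and hence contains a toric automorphism, and these generators are defined over $\KK$ (the split torus and the fan are); since the generators are $\KK$-points and Galois acts by group automorphisms, Galois acts trivially and the component group is a finite constant group-scheme generated by the images of toric automorphisms. I anticipate the only delicate point here is the descent packaging, namely confirming that conjugacy of maximal tori and the rationality of the generators combine to give a constant group-scheme over the non-closed field $\KK$.
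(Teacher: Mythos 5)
Your proposal follows the paper's architecture in all three parts, but part \eqref{Dem1} has a genuine gap at exactly its hardest point: you never prove the extension of $\alpha_e$ over the charts $X_\sigma$ with $\rho_e\notin\sigma(1)$, you only record the expectation that the locus $\{1+s\chi^e=0\}$ ``maps into $U$.'' Two things are missing. First, a closed subset mapping somewhere is not the right shape for gluing morphisms of schemes; the paper instead covers $\GA\times X_\sigma$ by the two \emph{opens} where $f=1+s\chi^e$, respectively $g=\chi^e$, is invertible (they cover since $f-sg=1$; note the $g$-locus is an open neighborhood of your closed locus). Second, on the $g$-locus one must actually identify a target chart and verify regularity: the paper takes $\sigma'$ to be the cone spanned by $\rho_e$ and $\sigma\cap e^\bot$ and factors $\alpha_e^*(\chi^m)=\chi^{m+\ell e}(1+s\chi^e)^{\langle\rho_e,m\rangle}\chi^{-\ell e}$, choosing $\ell\gg 0$ so that $m+\ell e\in(\sigma')^\vee$-compatible positivity holds against all rays of $\sigma$. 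Moreover, the fact that $\sigma'$ is a cone \emph{of $\Sigma$} is not automatic from the modern definition of a Demazure root --- it is a consequence of completeness (Demazure's Rem.~3 in Sec.~4, \S5), a point your sketch does not even flag. Without this computation, \eqref{Dem1}, and hence everything downstream, is unproved.

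In part \eqref{Dem2} your route (identify $T_{\id}\aut_X=\h^0(X,\mathcal{T}_X)$, compute the $M$-graded pieces to get dimension $d+\#\RT(\Sigma)$, then sandwich the generated subgroup $G'$) is the paper's, but your inequality $\dim G'\geq\operatorname{rank}d\mu_{\id}$ is unjustified when $G'$ is not known to be smooth: $d\mu_{\id}$ lands in $\lie G'$, whose dimension can strictly exceed $\dim G'$, and in characteristic $p$ this is precisely what is at stake (in characteristic $0$ Cartier's theorem rescues you). The paper closes this by invoking SGA3, Exp.~VI\textsubscript{B}, Prop.~7.1, which says the subgroup generated by the smooth connected subschemes $\TT$ and $\iota_e(\GA)$ is itself \emph{smooth} and connected; with that input your dimension chain works verbatim, so this gap is fixable by a citation. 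Two smaller remarks: your faithfulness check in \eqref{Dem1} is on field points only, whereas the paper verifies triviality on $A$-points for every $\KK$-algebra $A$; your alternative criterion (trivial kernel on geometric points plus injective differential, for a homomorphism of locally algebraic groups over a field) does suffice, so this is acceptable though loosely stated. Part \eqref{Dem3} is essentially the paper's proof --- conjugacy of maximal tori reduces to the normalizer of $\TT$ --- except that you replace the paper's appeal to Rosenlicht's lemma by the equivariance $\phi(t\cdot x)=c_\phi(t)\cdot\phi(x)$ for $\phi$ normalizing $\TT$, which is correct (and arguably more self-contained), and your Galois-descent packaging of constancy, via generators that are $\KK$-points, is sound.
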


Alternative proofs of this description exist in the literature for the automorphism \emph{group} (not the \emph{scheme}) of a complete toric variety. One of these is given in \cite{CoxJAG} for complete $\QQ$-factorial toric varieties over $\CC$. A similar result was later obtained in \cite{BG} for projective toric varieties over any base field. There is also the recent preprint \cite{MSS}, which works with arbitrary complete toric varieties over an algebraically closed field of characteristic $0$. The main feature of Theorem \ref{thm Demazure} is that it describes $\aut_X$ as a scheme, proving in particular that it is \emph{smooth}. We should remark however that we work over a base field $\KK$, while Demazure's results are valid over $\ZZ$, so there is still room for improvement.\\

Theorem \ref{thm Demazure} has consequences in the theory of twisted forms of toric varieties, which was developed by Duncan in \cite{Duncan1}. Indeed, when studying automorphisms in the proper split case (\S4.2 in \textit{loc.~cit.}), Duncan is led to assume that $\aut_X$ is a smooth algebraic group. Theorem \ref{thm Demazure} proves that this assumption is unnecessary. Moreover, Theorem \ref{thm Demazure} naturally yields an analogous description of the automorphism group of \emph{any} $\KK$-form of a toric variety: if such a $\KK$-form is defined by twisting a (split) toric variety $X$ with a cocycle $\alpha$ representing a class in $H^1(\KK,\aut_X)$, then its automorphism group is simply the inner twist ${}_\alpha\aut_X$ of the group-scheme $\aut_X$.

\subsection*{Acknowledgements}
We would like to thank Michel Brion for his comments on preliminary versions of this note and for encouraging us to prove Theorem \ref{thm Demazure}. We also thank Alexander Duncan for helpful email discussions and an anonymous referee for her or his suggestions concerning the layout of this note. The first author was partially supported by ANID via Fondecyt Grant 1200502. The second author was partially supported by ANID via Fondecyt Grant 1210010.

\section{Preliminaries on toric varieties}
  \label{sec:toric}

Let $M$ be a lattice, i.e., a finitely generated free abelian group and let $N=\homo(M,\ZZ)$ be its dual lattice. We let $M_{\RR}=M\otimes_\ZZ\RR$, $N_{\RR}=N\otimes_\ZZ\RR$, and $\langle\cdot,\cdot\rangle:N_\RR\times M_\RR\rightarrow \RR$ be the duality pairing given by $\langle p,m\rangle=p(m)$. Let $\TT=N \otimes_\ZZ \GM$ be the algebraic torus whose character lattice is $M$ and whose 1-parameter subgroup lattice is $N$. As customary, for every $m\in M$ we denote by $\chi^m\colon \TT\rightarrow \GM$ the corresponding character and for every $p\in N$ we denote by $\lambda_p\colon \GM\rightarrow \TT$ the corresponding 1-parameter subgroup.
  
 Recall that a toric variety $X$ is a normal variety having $\TT$ as an open set such that the action of $\TT$ on $\TT$ by translations extends to an action on $X$. Let $X$ and $X'$ be toric varieties with corresponding tori $\TT$ and $\TT'$, respectively. A toric morphism is a morphism $\varphi\colon X\rightarrow X'$ that restricts to a morphism of algebraic groups $\TT\rightarrow \TT'$.

 A strictly convex polyhedral cone $\sigma$ in $N_\RR$ is called rational if $\sigma$ is spanned by a finite set of vectors in the lattice $N$. All cones will be strictly convex, rational and polyhedral. We will refer to them simply as cones. A fan $\Sigma\in N_\RR$ is a finite collection of cones such that every face of a cone $\sigma\in\Sigma$ is contained in the fan $\Sigma$; and for all cones $\sigma,\sigma'\in\Sigma$ the intersection $\sigma\cap\sigma'$ is a face in both cones $\sigma$ and $\sigma'$. Let $\Sigma$ and $\Sigma'$ be fans in $N_\RR$ and $N'_\RR$, respectively. A morphism of fans is a linear map $\psi\colon N_\RR\rightarrow N'_\RR$ restricting to a homomorphism $N\rightarrow N'$ such that for every $\sigma\in \Sigma$, there exists $\sigma'\in \Sigma'$ with $\psi(\sigma)\subset \sigma'$.

Let $\Sigma$ be a fan in $N_\RR$. A toric variety $X_\Sigma$ is defined from $\Sigma$ as follows. For every $\sigma\in \Sigma$ we define the affine toric variety $X_\sigma=\spec \KK[\sigma^\vee\cap M]$ where \[\sigma^\vee:=\{m\in M_\RR\mid \langle p,m\rangle\geq 0\, \mbox{for all } p\in \sigma\}\in M_\RR,\]
is the dual cone of $\sigma\in N_\RR$ and $\KK[\sigma^\vee\cap M]$ is the semigroup algebra defined via
$$\KK[\sigma^\vee\cap M]=\bigoplus_{m\in \sigma^\vee\cap M} \KK\cdot\chi^m\quad\mbox{where}\quad \chi^m\chi^{m'}=\chi^{m+m'}\mbox{ and } \chi^0=1\,.$$
Furthermore, for every $\sigma,\sigma'\in \Sigma$ the algebra inclusions $\KK[\sigma^\vee\cap M]\hookrightarrow \KK[(\sigma\cap \sigma')^\vee\cap M]\hookleftarrow\KK[(\sigma')^\vee\cap M]$ induce $\TT$-equivariant affine open embeddings $X_\sigma\hookleftarrow X_{\sigma\cap\sigma'}\hookrightarrow X_{\sigma'}$. The toric variety $X_\Sigma$ is obtained by glueing the family of affine toric varieties $\{X_\sigma\mid \sigma\in \Sigma\}$ along these embeddings.

The construction described above can also be defined for morphisms. This yields an equivalence of categories between the category of toric varieties with toric morphisms and the category of fans with morphisms of fans. In particular, we recall that the toric variety $X_\Sigma$ is complete if and only if the union of the cones of $\Sigma$ equals $N_\RR$. We refer to any textbook on toric geometry such as \cite{Cox,F93,T88} for details on this equivalence.

We end this section by recalling the definition of a Demazure root, so that every object in Theorem \ref{thm Demazure} is correctly defined. Let $\Sigma$ be a fan in $N_\RR$. For every non-negative integer $i$, we denote by $\Sigma(i)$ the $i$-skeleton of $\Sigma$, i.e., the set of $i$-dimensional cones in $\Sigma$. As usual, we identify a ray $\rho\in \Sigma(1)$ with its primitive vector in $N$. A Demazure root of the fan $\Sigma$ is a lattice vector $e\in M$ such that there exists $\rho_e \in \Sigma(1)$ with $\langle \rho_e,e\rangle=-1$ and $\langle \rho,e\rangle\geq 0$ for all $\rho\in \Sigma(1)\setminus \{\rho_e\}$. Note that such a $\rho_e$ is unique. We denote the set of roots of $\Sigma$ by $\RT(\Sigma)$.

\section{Proof of Theorem \ref{thm Demazure}}
\label{sec:proof-demazure}

In this section we prove Theorem \ref{thm Demazure} following \cite{De}. We would like to thank Michel Brion for his help with some parts of this proof.

\subsection*{Proof of Theorem~\ref{thm Demazure}~\eqref{Dem1}}
Following \cite[Sec.~4, \S5, Thm.~3]{De}, we compute first the comorphism corresponding to \eqref{eq:root-subgroup} at the level of function fields. Note that $\alpha_e$ can be decomposed as the composition
\[\GA\times\TT \stackrel{\mu_1}{\dashrightarrow} \GM\times\GA\times\TT \stackrel{\mu_2}{\dashrightarrow} \GA\times\TT \stackrel{\mu_3}{\dashrightarrow} \TT,\]
where $\mu_1$ sends $(s,t)$ to $(\chi^e(t),s,t)$, $\mu_2$ sends $(r,s,t)$ to $(rs+1,t)$ and $\mu_3$ sends $(s,t)$ to $t\lambda_{\rho_e}(s)$. Each of these comorphisms is easily computable:
\begin{itemize}
    \item $\mu_3^*$ sends a character $\chi^m\in\OO_X(\TT)=\KK[M]$ to $s^{\langle\rho_e,m\rangle}\chi^m\in \KK(s)[M]$;
    \item $\mu_2^*$ sends $f\chi^m\in\KK(s)[M]$ to $f'\chi^m\in\KK(r,s)[M]$, where $f'\in\KK(r,s)$ is obtained from $f$ by replacing every appearance of $s$ by $rs+1$.
    \item $\mu_1^*$ sends $g\chi^m\in\KK(r,s)[M]$ to $g'\chi^m\in\mathrm{Frac}(\KK(s)[M])$, where $g'$ is obtained from $g$ by replacing every appearance of $r$ by $\chi^e$.
\end{itemize}
Putting all this together, we obtain
\begin{equation}\label{eqn Demazure}
\alpha_e^*(\chi^m)=\chi^m(1+s\chi^e)^{\langle\rho_e,m\rangle},\quad \mbox{for all } \chi^m\in\OO_X(\TT).
\end{equation}

Let $\sigma\in \Sigma$. We will show that $\alpha_e$ defines a morphism $\GA\times X_\sigma\to X$. Assume first that $\rho_e\in\sigma(1)$. In this case, $\alpha_e$ actually defines a morphism $\GA\times X_\sigma\rightarrow X_\sigma$. Indeed, letting $m\in \sigma^\vee\cap M$, we have that in \eqref{eqn Demazure} the character $\chi^{m+ie}$ appears with $0\leq i\leq \langle \rho_e,m\rangle$. Clearly $\langle\rho,m+ie\rangle\geq 0$ for $\rho\in\sigma(1)$ different from $\rho_e$. On the other hand, $\langle \rho_e,m+ie\rangle=\langle \rho_e,m\rangle-i\geq 0$. So $m+ie\in\sigma^\vee\cap M$ and the claim follows.

Assume now that $\rho_e\notin\sigma(1)$ and let $f=1+s\chi^e$ and $g=\chi^e$ in $\KK[\sigma^\vee\cap M][s]$. It is immediate that the standard open subvarieties $(\GA\times X_\sigma)_f$ and $(\GA\times X_\sigma)_g$ cover $\GA\times X_\sigma$, so it will suffice to prove that $\alpha_e$ induces morphisms $(\GA\times X_\sigma)_f\rightarrow X_{\sigma}$ and $(\GA\times X_\sigma)_g\rightarrow X_{\sigma'}$ for a suitable $\sigma'\in\Sigma$. This is evident for $f$ by \eqref{eqn Demazure}. As for $g$, let $\sigma'$ be the cone\footnote{Note that the fact that $\sigma'$ is a cone in $\Sigma$ is part of Demazure's definition of a root in \cite[Sec.~4, \S5, Def.~4]{De}. However, Demazure proves (cf.~\cite[Sec.~4, \S5, Rem.~3]{De}) that in the complete case this is in fact a consequence of our current definition of a Demazure root.} in $\Sigma$ spanned by $\rho_e$ and $\sigma\cap e^\bot$. Letting $m\in (\sigma')^\vee\cap M$ we have 
$$\alpha_e^*(\chi^m)=\chi^{m+\ell e}(1+s\chi^e)^{\langle \rho_e,m\rangle} \chi^{-\ell e}, \quad \mbox{for any integer }  \ell. $$
The last two factors in this product are already morphisms $(\GA\times X_\sigma)_g\to X_{\sigma'}$ since $g=\chi^e$ and $\rho_e\in\sigma'$. To conclude, it suffices to prove that there exists an integer $\ell$ such that
\[\langle m+\ell e, \rho\rangle=\langle m,\rho\rangle+\ell\langle e, \rho\rangle\geq 0, \quad \mbox{for all }\rho\in \sigma(1).\]
Now, if $\langle e, \rho\rangle=0$, then $\langle m,\rho\rangle\geq 0$ by definition of $\sigma'$ and we are done. Otherwise, $\langle e, \rho\rangle>0$ and it is enough to take $\ell \geq -\langle m,\rho\rangle$. Since there are finitely many rays in $\sigma$, we may always choose $\ell$ big enough.

We have thus proved that $\alpha_e$ defines a morphism $\GA\times X_\sigma\to X$ for any cone $\sigma\in\Sigma$. And since the $X_\sigma$ cover $X$, this proves that $\alpha_e$ is a morphism $\GA\times X\to X$.\\

We are left with the faithfulness of the action, which implies that the morphism $\iota_e:\GA\to\aut_X$ defined by $\alpha_e$ is a closed immersion. This is stated outright by Demazure in \cite[Sec.~4, \S5]{De} right after proving his Th\'eor\`eme 3, but it is slightly less evident in the setting of non-smooth varieties.

In order to check this, we need to prove that, for every $\KK$-algebra $A$ and for $s\in \GA(A)=A$, we have the following implication (we abusively omit the notation $\spec$ in what follows):
\[\iota_e(s)=\id_{X\times_\KK A} \Rightarrow s=0.\]

Let $s\in A$ be as above and fix a cone $\sigma\in\Sigma$ containing $\rho_e$. Note that, since $\iota_e(s)=\id_{X\times_\KK A}$, it sends $X_\sigma\times_\KK A$ to $X_\sigma\times_\KK A$. We may use then \eqref{eqn Demazure} in order to describe $\iota_e(s)$ restricted to $X_\sigma\times_\KK A$. Using this we see that $\chi^m\in A[\sigma^\vee\cap M]$ maps to $\chi^m(1+s\chi^e)^{\langle\rho_e,m\rangle}\in A[\sigma^\vee\cap M]$. Since $\rho_e$ is a primitive lattice vector and belongs to $\sigma$, there exists $m_0\in\sigma^\vee\cap M$ such that $\langle\rho_e,m_0\rangle=1$. Since by hypothesis $\iota_e(s)(\chi^{m_0})=\chi^{m_0}$, we get then $s\chi^{m_0+e}=0\in A[\sigma^\vee\cap M]$, which implies $s=0$, as desired. This concludes the proof of the faithfulness of the action defined by $\alpha_e$.

\subsection*{Proof of Theorem~\ref{thm Demazure}~\eqref{Dem2}}
This item corresponds to the first part of \cite[Sec.~4, \S6, Prop.~11]{De}, which we follow closely. For $e\in \RT(\Sigma)$ we have by part \eqref{Dem1} a closed immersion $\iota_e:\GA\to \aut_X$, while we also have the natural closed immersion $\iota_0:\TT\to\aut_X$. This gives injective morphisms $i_e:\lie(\GA)\to\lie(\aut_X)$ and $i_0:\lie(\TT)\to\lie(\aut_X)$, where we recall that $\lie(\aut_X)$ consists of derivations of $\OO_X$.

We claim\footnote{This claim corresponds to \cite[Sec.~4, \S5, Prop.~7]{De}, for which we give a simplified proof in the complete case.} that the images of $i_0$ and $i_e$ for $e\in\RT(\Sigma)$ generate $\lie(\aut_X)$. Since every derivation can be decomposed as a finite sum of homogeneous derivations, it is enough to show that every homogeneous derivation is in the image of $i_e$ for some $e\in\RT(\Sigma)\cup\{0\}$. Moreover, every derivation of $\OO_X$ induces a derivation of $\OO_X(\TT)=\KK[M]$ and it is easy to check that every homogeneous derivation of $\KK[M]$ is a multiple of
$$\partial_{p,e}\colon \KK[M]\to \KK[M],\quad \chi^m\mapsto \langle p,m\rangle\cdot \chi^{m+e},$$
for some $e\in M$ and some primitive $p\in N$. Assume now that $\partial_{p,e}$ is the restriction of a derivation of $\OO_X$. Then for every $\sigma\in\Sigma$ we have 
$\partial_{p,e}(\KK[\sigma^\vee\cap M])\subset \KK[\sigma^\vee\cap M]$, which is equivalent by the equation above to the following statement:
  \begin{align}
    \label{eq robada}
    \mbox{For every } m\in (\sigma^\vee\cap M)\setminus p^\bot, 
    \mbox{ we have }
    \langle\rho, m+e\rangle\geq 0\mbox{ for all } \rho\in
    \sigma(1)\,.
  \end{align}
If $e=0$, then $\partial_{p,e}$ corresponds to the infinitesimal generator of the one parameter subgroup $\lambda_p$ of $\TT$, hence it belongs to the image of $i_0$ and we are done. If $e\neq 0$, we claim that $e$ is a Demazure root, $p\in\{\pm\rho_e\}$ and $\partial_{p,e}$ lies in the image of $i_e$. Indeed, the completeness of $X$ implies that there exists $\rho_0\in \Sigma(1)$ such that $e\notin \rho_0^\vee$. Fix such a $\rho_0$ and assume that $p$ is not a multiple of $\rho_0$. Then there exists $m\in \rho_0^\vee\cap M$ such that $\langle \rho_0,m\rangle=0$ and $\langle p,m\rangle\neq 0$. Hence, \eqref{eq robada} implies that $\langle \rho_0,e\rangle\geq 0$ and so $e\in \rho_0^\vee$, which yields a contradiction. Thus, $p$ is a multiple of $\rho_0$, which proves in particular that $\rho_0$ is unique.

Finally, let $m\in \rho_0^\vee\cap M$ such that $\langle \rho_0, m\rangle=1$. Then \eqref{eq robada} implies that $\langle \rho_0,m+e\rangle \geq 0$, which yields $\langle \rho_0,e\rangle \geq -1$ and thus $\langle \rho_0,e\rangle =-1$. This proves that $e$ is a Demazure root and that $\rho_0=\rho_e$. Differentiating equation \eqref{eqn Demazure} we get that the infinitesimal generator of $\alpha_e$ is $\partial_{\rho_e,e}$, so that $\partial_{p,e}=\pm\partial_{\rho_e,e}$ lies in the image of $i_e$, proving the claim.

In order to conclude, by \cite[Exp.~VI\textsubscript{B}, Prop.~7.1]{SGA3}, we may consider the \emph{smooth} connected subgroup $G\subset\aut_X$ generated by the closed immersions $\iota_0$ and $\iota_e$ for $e\in \RT(\Sigma)$. The claim above implies that $\lie(G)=\lie(\aut_X)$, which immediately tells us that $\aut_X$ is smooth at the identity and hence everywhere. From this we get that $G=\aut^0_X$.

\subsection*{Proof of Theorem~\ref{thm Demazure}~\eqref{Dem3}}
This corresponds to the second part of \cite[Sec.~4, \S6, Prop.~11]{De}, whose proof is a direct application of \cite[Sec.~4, \S6, Prop.~10]{De}, which Demazure proves by ``reasoning as in the second corollary to \cite[Sec.~2, \S5, Thm.~4]{De}''. We give here a simplified proof that uses the well-known equivalence of categories between toric varieties and fans.

Note that there are finitely many toric automorphisms of $X$ and that they are all defined over $\KK$ since they come from automorphisms of the corresponding complete fan. We only have to prove then that these generate the quotient group-scheme $\aut_X/\aut_X^0$. And since $\aut_X$ is smooth by part \eqref{Dem2}, we only need to check this over an algebraically closed field $\KK$.

Let $\varphi\in\aut_X(\KK)$ be a representative of an element in the quotient. Since $\TT$, seen as automorphisms of $X$, is a maximal torus of $\aut_X$ and maximal tori are $\KK$-conjugate in $\aut_X^0$, up to composing $\varphi$ with a suitable element in $\aut_X^0$, we may assume that it normalizes $\TT$.

This assumption implies in turn that $\varphi$ stabilizes the torus $\TT$ inside $X$ since it is the unique open $\TT$-orbit in $X$. Then, Rosenlicht's Lemma (cf.~\cite[Lem.~6.5~(iii)]{Sansuc}) tells us that, up to composing again $\varphi$ with a suitable $\psi\in\TT\subset\aut_X^0$, we may assume that it restricts to a group automorphism of $\TT\subset X$, and this is the definition of a toric morphism. This proves that toric automorphisms generate the quotient, as desired.

\section{Proof of Theorem \ref{main thm}}
\label{sec:proof}

We start by considering the neutral component of the automorphism group, which amounts to proving the following result.

\begin{proposition}\label{prop aut0}
Let $X_1,\ldots,X_n$ be complete toric varieties and let $X=\prod_{i=1}^n X_{i}$. Then
\[\aut^0_X\simeq \prod_{i=1}^n\aut^0_{X_i}.\]
\end{proposition}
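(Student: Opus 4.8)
The plan is to prove that $\aut_X^0 \simeq \prod_i \aut_{X_i}^0$ by leveraging Theorem~\ref{thm Demazure}~\eqref{Dem2}, which tells us that each $\aut_{X_i}^0$ is generated by the torus $\TT_i$ and the root subgroups $\iota_e$ for $e \in \RT(\Sigma_i)$. My strategy is to reduce the problem to a combinatorial statement about Demazure roots, namely that the roots of the product fan $\Sigma = \Sigma_1 \times \cdots \times \Sigma_n$ are exactly the disjoint union of the roots of the factors, suitably embedded. Since $X$ corresponds to the fan $\Sigma$ living in $N_\RR = (N_1)_\RR \oplus \cdots \oplus (N_n)_\RR$, with character lattice $M = M_1 \oplus \cdots \oplus M_n$, the first step is to describe $\Sigma(1)$: the rays of a product fan are precisely the rays of the individual factors, viewed inside the appropriate summand of $N_\RR$ (with the other coordinates zero). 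This is a standard fact about products of fans and follows directly from the definition of the product of cones.

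Granting this description of the rays, the key step is to show that $e = (e_1, \ldots, e_n) \in M$ is a Demazure root of $\Sigma$ if and only if exactly one component $e_i$ is a Demazure root of $\Sigma_i$ and all other components are zero. The forward direction is the substantive one: if $e$ is a root with distinguished ray $\rho_e$, then $\rho_e$ lies in some factor, say the $i$-th, so $\langle \rho_e, e\rangle = \langle \rho_e, e_i\rangle = -1$. For any ray $\rho$ in a factor $j \neq i$, we have $\langle \rho, e\rangle = \langle \rho, e_j\rangle \geq 0$; but completeness of each $\Sigma_j$ forces the $e_j$ with $j \neq i$ to vanish, since a nonzero lattice vector cannot pair non-negatively with every ray of a complete fan (the rays of a complete fan span $N_\RR$ and positively span it, so $\langle \rho, e_j\rangle \geq 0$ for all $\rho \in \Sigma_j(1)$ implies $e_j = 0$). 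With the off-diagonal components killed, the root condition on $\Sigma$ collapses exactly to the root condition for $e_i$ on $\Sigma_i$. The converse is immediate by reversing these pairings.

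Once the roots are matched up, I would argue that the root subgroup $\iota_e$ attached to a root $e$ supported in the $i$-th factor is precisely the image of the corresponding root subgroup of $\aut_{X_i}^0$ under the natural closed immersion $\aut_{X_i} \hookrightarrow \aut_X$ induced by acting on the $i$-th factor and trivially elsewhere. This is checked by comparing the defining formula~\eqref{eq:root-subgroup}: since $\rho_e$ and $\chi^e$ only involve the $i$-th coordinates, the action $\alpha_e$ on $X$ fixes all factors except $X_i$, where it restricts to the $i$-th root action. Likewise the big torus $\TT = \prod_i \TT_i$ decomposes as the product of the factor tori. By Theorem~\ref{thm Demazure}~\eqref{Dem2}, $\aut_X^0$ is generated by $\TT$ together with all the $\iota_e$; grouping these generators by the factor in which they are supported, and observing that generators supported in different factors commute (they act on disjoint sets of coordinates), we conclude that $\aut_X^0$ is the product of the subgroups $\aut_{X_i}^0$, each sitting inside $\aut_X$ via the natural inclusion.

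The main obstacle I anticipate is the completeness argument that forces the off-diagonal components of a root to vanish; this is where properness is genuinely used and where the description of roots of a product could fail for non-complete fans. A secondary technical point is to argue cleanly at the scheme level that the product of the commuting smooth connected subgroups $\aut_{X_i}^0$ really is their \emph{direct} product inside $\aut_X$ (as opposed to some quotient), which follows because their intersections are trivial (a nontrivial element of $\aut_{X_i}^0$ acts nontrivially on $X_i$ while $\aut_{X_j}^0$ for $j \neq i$ fixes $X_i$) and their Lie algebras meet only in $0$ inside $\lie(\aut_X)$; combined with smoothness from Theorem~\ref{thm Demazure}~\eqref{Dem2}, a dimension and generation count yields the isomorphism.
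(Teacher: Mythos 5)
Your proposal is correct and takes essentially the same route as the paper's proof: you identify the Demazure roots of the product fan as exactly the roots of the individual factors (with completeness forcing the off-diagonal components to vanish, which is precisely the paper's argument via the support of the fan being all of $N_\RR$) and then invoke Theorem~\ref{thm Demazure}~\eqref{Dem2} to generate $\aut^0_X$ by the factor tori and root subgroups. The only differences are cosmetic: the paper reduces to $n=2$ while you treat all factors simultaneously, and you spell out the final direct-product verification (commutation, trivial intersections) that the paper leaves implicit.
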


This proposition is actually already known for \emph{arbitrary} complete varieties as a consequence of Blanchard's Lemma (cf.~\cite[Cor.~7.2.3]{Brion}), but we give nevertheless a full proof here below that only uses Theorem \ref{thm Demazure}, providing a ``full-toric'' self-contained proof of Theorem \ref{main thm}.

\begin{remark}
Since Proposition \ref{prop aut0} is valid for arbitrary complete varieties, one is naturally led to ask whether this is also the case for Theorem \ref{main thm}. This is far from being true. Indeed, it suffices to consider a general elliptic curve $E$ and the product $E\times E$. Then $\aut_E\simeq E\rtimes\ZZ/2\ZZ$, while $\aut_{E\times E}\simeq (E\times E)\rtimes \mathrm{GL}_2(\ZZ)$. In particular, the quotient $\aut_{E\times E}/\aut^0_{E\times E}$ is infinite, while $\aut_E/\aut^0_E$ is finite.
\end{remark}

\begin{proof}[Proof of Proposition \ref{prop aut0}]

  It is enough to show the result for $n=2$, i.e., for $X\times X'$ with $X,X'$ complete toric varieties. We will show that $\aut^0_{X\times X'}$ is spanned by $\aut_X^0\times \{\id_{X'}\}$ and $\{\id_X\}\times \aut^0_{X'}$ proving the proposition.

  Let $\Sigma$ and $\Sigma'$ be the fans in $N_\RR$ and $N'_\RR$ corresponding to the toric varieties $X$ and $X'$, with acting tori $\TT$ and $\TT'$, respectively. By \cite[Prop.~3.1.14]{Cox} the fan of $X\times X'$ is
  \[\Sigma\times \Sigma'=\{\sigma\times\sigma'\mid \sigma\in \Sigma, \sigma'\in \Sigma'\}.\]
  Remark that for every Demazure root $e\in \RT(\Sigma)$ we obtain a root $(e,0)$ of $\Sigma\times \Sigma'$. Analogously, for every Demazure root $e'\in \RT(\Sigma')$ we obtain a root $(0,e')$ of $\Sigma\times \Sigma'$. Furthermore, by Theorem~\ref{thm Demazure}~\eqref{Dem2} we have that $\TT\times \{\id_{X'}\}$ and the images of $\iota_{(e,0)}$ span a group isomorphic to $\aut^0_X\times \{\id_{X'}\}$ inside $\aut^0_{X\times X'}$. Analogously, $\{\id_X\}\times \TT'$ and the images of $\iota_{(0,e')}$ span a group isomorphic to $\{\id_X\}\times \aut^0_{X'}$ inside $\aut^0_{X\times X'}$.

  Hence, to prove the proposition, it is enough by Theorem~\ref{thm Demazure}~\eqref{Dem2} to show that every Demazure root $\overline{e}$ of $\Sigma\times \Sigma'$ has the form $(e,0)$ for $e$ a Demazure root of $\Sigma$ or $(0,e')$ for $e'$ a Demazure root of $\Sigma'$. Let $\overline{e}=(e,e')$ be a Demazure root of $\Sigma\times \Sigma'$ and let $\rho_{\overline{e}}$ be the unique ray of $\Sigma\times \Sigma'$ such that $\langle \rho_{\overline{e}},\overline{e}\rangle=-1$. Since the 1-skeleton of $\Sigma\times\Sigma'$ is the disjoint union of $\Sigma(1)\times\{0\}$ and $\{0\}\times\Sigma'(1)$, we may assume, without loss of generality, that $\rho_{\overline{e}}=(\rho_0,0)$ for some $\rho_0\in \Sigma(1)$. Then for every ray $\rho'\in \Sigma'(1)$ we have $\langle (0,\rho'),(e,e')\rangle=\langle\rho',e'\rangle\geq 0$. Since $X'$ is complete, the support of $\Sigma'$ is $N'_\RR$ and hence $\langle\rho',e'\rangle\geq 0$ for every $\rho'\in \Sigma'(1)$ yields $e'=0$. Moreover, $\langle (\rho,0),(e,e')\rangle=\langle \rho,e\rangle$ and so $\langle \rho,e\rangle\geq 0$ for all $\rho\neq \rho_0$ and $\langle \rho_0,e\rangle=-1$. This shows that $e$ is a Demazure root of $\Sigma$, proving the proposition.
\end{proof}

The second ingredient in the proof of Theorem \ref{main thm} is the following lemma, which has the same flavour as a result by Ballard, Duncan and McFaddin on automorphisms of products of complete fans (cf.~\cite[Lem.~3.10]{Duncan2}).

\begin{lemma}\label{lemma injection}
Let $X,X_1',X_2'$ be complete toric varieties with $X$ indecomposable. Let $\Sigma,\Sigma_1',\Sigma_2'$ be the respective fans. Consider a toric morphism $\varphi:X\to X_1'\times X_2'$ and denote by $\psi$ the corresponding fan morphism. Assume that, for every $1\leq i\leq \dim(X)$ and for every $\sigma\in\Sigma(i)$, we have $\psi(\sigma)\in (\Sigma_1'\times\Sigma_2')(i)$. Then $\varphi(X)$ is contained in either $X_1'\times\{1\}$ or $\{1\}\times X_2'$.
\end{lemma}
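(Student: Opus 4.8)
The plan is to translate everything into the language of fans and to reduce the conclusion to a purely combinatorial dichotomy. Write $N,N_1',N_2'$ for the cocharacter lattices of $X,X_1',X_2'$, so that $X_1'\times X_2'$ has lattice $N_1'\oplus N_2'$ and fan $\Sigma_1'\times\Sigma_2'$, and decompose the fan morphism as $\psi=(\psi_1,\psi_2)$ with $\psi_i\colon N_\RR\to(N_i')_\RR$. The containment $\varphi(X)\subset X_1'\times\{1\}$ is equivalent to $\psi_2=0$ (and symmetrically for the other factor), since $\varphi(X)=\overline{\varphi(\TT)}$ and $\varphi|_\TT$ is the group homomorphism attached to $\psi|_N$. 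So the entire statement amounts to proving that \emph{either $\psi_1=0$ or $\psi_2=0$}. A first useful observation is that $\psi$ is injective: since $X$ is complete there is a maximal cone $\sigma\in\Sigma(\dim X)$, and the hypothesis forces $\psi(\sigma)\in(\Sigma_1'\times\Sigma_2')(\dim X)$ to have dimension $\dim X$, so $\psi$ is injective on $\Span(\sigma)=N_\RR$.

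Next I would exploit the hypothesis one dimension at a time, starting with the rays. For each $\rho\in\Sigma(1)$, the case $i=1$ gives $\psi(\rho)\in(\Sigma_1'\times\Sigma_2')(1)$, i.e.\ $\psi(\rho)$ is a ray of the product fan; but every such ray lies entirely in $(N_1')_\RR\times\{0\}$ or in $\{0\}\times(N_2')_\RR$. This yields a partition $\Sigma(1)=R_1\sqcup R_2$ according to which factor the image lands in (the two options are mutually exclusive because $\psi$ is injective). Since every cone $\sigma$ is generated by its rays and $\psi(\sigma)=\sigma_1'\times\sigma_2'$ is a product cone, the rays of $\sigma$ in $R_1$ must generate $\sigma_1'$ and those in $R_2$ must generate $\sigma_2'$; hence $\sigma$ itself splits as a sum of a cone spanned by its $R_1$-rays and a cone spanned by its $R_2$-rays. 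Setting $U_1=\Span\{\rho:\rho\in R_1\}$ and $U_2=\Span\{\rho:\rho\in R_2\}$, completeness of $X$ (so that the rays span $N_\RR$) gives $U_1+U_2=N_\RR$, and injectivity of $\psi$, together with $\psi(U_1)\subset(N_1')_\RR$ and $\psi(U_2)\subset(N_2')_\RR$, gives $U_1\cap U_2=0$. Thus $N_\RR=U_1\oplus U_2$ and $\Sigma=\Sigma_1\times\Sigma_2$ as a product of fans in $U_1$ and $U_2$ at the level of $\RR$-cones.

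The hard part will be to promote this real product structure to a genuine product decomposition of $X$, namely a splitting of the \emph{lattice} $N=(N\cap U_1)\oplus(N\cap U_2)$; only such a lattice-level splitting identifies $X$ with an honest product $X_{\Sigma_1}\times X_{\Sigma_2}$ and thereby lets indecomposability intervene. I expect to obtain it from the injectivity of $\psi$ combined with an integrality argument: for $v\in N$ the real decomposition $v=u_1+u_2$ with $u_i\in U_i$ maps under $\psi$ to $\psi(v)=\psi(u_1)+\psi(u_2)$, and since $\psi(v)\in N_1'\oplus N_2'$ the two summands are the integral projections $\pi_1\psi(v)$ and $\pi_2\psi(v)$; the point to secure is that these summands lie back in $\psi(N)$, which is precisely where the full strength of the hypothesis---that every cone up to dimension $\dim X$ maps \emph{onto} a cone of $\Sigma_1'\times\Sigma_2'$---is used to control the lattice points of the maximal cones. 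Granting the lattice splitting, $X\cong X_{\Sigma_1}\times X_{\Sigma_2}$ with $\dim U_i$-dimensional factors; indecomposability of $X$ then forces $U_1=0$ or $U_2=0$, equivalently $R_1=\varnothing$ or $R_2=\varnothing$, which is exactly $\psi_1=0$ or $\psi_2=0$. This gives $\varphi(X)\subset\{1\}\times X_2'$ or $\varphi(X)\subset X_1'\times\{1\}$, as claimed.
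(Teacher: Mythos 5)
Your outline follows the paper's own route almost exactly through its first two paragraphs: injectivity of $\psi$ from a maximal cone, the partition $\Sigma(1)=R_1\sqcup R_2$ (the paper's $B_1\sqcup B_2$), the splitting of each maximal cone, and the real decomposition $N_\RR=U_1\oplus U_2$ with $\Sigma=\Sigma_1\times\Sigma_2$ (the paper additionally verifies, via supporting hyperplanes, that the two pieces $\sigma_1,\sigma_2$ of a maximal cone are \emph{faces} of $\sigma$ and hence cones of $\Sigma$, so that $\Sigma_1,\Sigma_2$ really are subfans --- a small check you should include). The genuine gap is the step you yourself flag and then only ``grant'': promoting the real splitting to a lattice splitting $N=(N\cap U_1)\oplus(N\cap U_2)$. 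This step cannot be secured from the stated hypotheses, because the integrality claim you hope for --- that the projections $(\pi_1\psi(v),0)$ and $(0,\pi_2\psi(v))$ lie back in $\psi(N)$ --- requires $\psi(N)$ to be \emph{saturated} in $N_1'\oplus N_2'$, and nothing in the hypotheses forces that. Concretely: let $N\subset\RR^2$ be the lattice generated by $(1,0)$ and $(\tfrac12,\tfrac12)$, and let $\Sigma$ be the fan of the four quadrants, with rays $\pm(1,0),\pm(0,1)$ (primitive in $N$). Each maximal cone has multiplicity $2$ in $N$, so $X=X_\Sigma$ is a complete surface with four $A_1$ singularities; it is not $\PP^1\times\PP^1$, hence indecomposable (the only decomposable complete toric surface is $\PP^1\times\PP^1$). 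Take $X_1'=X_2'=\PP^1$ and $\psi\colon N\to\ZZ\oplus\ZZ$, $v\mapsto 2v$. Then $\psi_\RR$ is a homothety, so $\psi(\sigma)=\sigma\in(\Sigma_1'\times\Sigma_2')(i)$ for every $\sigma\in\Sigma(i)$ and every $i$, yet the induced toric morphism $X\to\PP^1\times\PP^1$ is a finite surjection of degree two, so the conclusion fails. The failure is exactly at your step: $\psi(N)$ is the index-two sublattice generated by $(2,0)$ and $(1,1)$, and for $v=(\tfrac12,\tfrac12)$ the projection $(1,0)$ of $\psi(v)=(1,1)$ does not lie in $\psi(N)$, so $u_1=(\tfrac12,0)\notin N$.

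It is worth saying that on this point you are \emph{more} careful than the paper: the paper's proof passes directly from the real fan splitting $\Sigma=\Sigma_1\times\Sigma_2$ to ``this contradicts the indecomposability of $X$,'' silently eliding the lattice issue you isolated, and the example above shows the lemma as literally stated needs an extra hypothesis. In the one place the lemma is applied (the restriction of a toric automorphism of a product to one of its factors), $\psi$ is the restriction of a lattice automorphism to a direct summand, so $\psi(N)=\psi(N_\RR)\cap(N_1'\oplus N_2')$ is saturated; under that hypothesis your projection argument does close: $(\pi_1\psi(v),0)=\psi(u_1)$ is integral and lies in $\psi(N_\RR)$, hence in $\psi(N)$, so $u_1\in N$ by injectivity, giving $N=(N\cap U_1)\oplus(N\cap U_2)$, an honest decomposition $X\simeq X_{\Sigma_1}\times X_{\Sigma_2}$ (both factors complete since the support of $\Sigma$ is $N_\RR$), and the desired contradiction unless $R_1=\emptyset$ or $R_2=\emptyset$. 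So your proposal, as written, is incomplete at its self-identified crux; supplemented with a saturation hypothesis (e.g.\ that $\varphi$ restricts to a closed immersion of tori, as in the paper's application), it becomes a complete proof, and arguably a more scrupulous one than the published argument.
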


\begin{proof}

Let $N$ and $M$ be the lattices corresponding to $X$ and let $N'_i$ and $M'_i$ be the lattices corresponding to $X_i'$ for $i=1,2$. In particular, $\Sigma$ is a fan in $N_{\RR}$ and $\Sigma_i'$ is a fan in $N'_{i,\RR}$. Note that the hypothesis on $\psi$, applied to a full dimensional cone in $\Sigma$, implies that $\psi:N_\RR\to N'_{1,\RR}\oplus N'_{2,\RR}$ is injective.

Since the 1-skeleton of $\Sigma_1'\times\Sigma_2'$ is the disjoint union of $\Sigma_1'(1)\times\{0\}$ and $\{0\}\times\Sigma_2'(1)$ and $\psi$ preserves 1-skeletons, we may take preimages and write $\Sigma(1)=B_1\sqcup B_2$. Define $N_{i,\RR}:=\Span(B_i)$ for $i=1,2$ and note that $N_\RR = N_{1,\RR}\oplus N_{2,\RR}$ since the images of $N_{1,\RR}$ and $N_{2,\RR}$ in $N'_{1,\RR}\oplus N'_{2,\RR}$ have trivial intersection and $X$ is complete. This decomposition of $N_\RR$ gives a natural decomposition $M_\RR=M_{1,\RR}\oplus M_{2,\RR}$, with $M_{i,\RR}$ canonically isomorphic to the dual of $N_{i,\RR}$.

Let $\sigma$ be a full dimensional cone in $\Sigma$ and write $\sigma(1)=S_1\sqcup S_2$ by taking intersections with $B_1$ and $B_2$ respectively and note that $N_{i,\RR}=\Span(S_i)$. Let $\sigma_i\subset N_{i,\RR}$ be the cone spanned by $S_i$. Let $u_i\in M_{i,\RR}$ be an element in the relative interior of $\sigma_i^\vee\in  M_{i,\RR}$, so that $\langle p,u_i\rangle>0$ for every $0\neq p\in\sigma_i$. Let  $H_i:=\{p\in N_\RR\mid \langle p,u_i\rangle=0\}$ be the supporting hyperplane in $N_\RR$ of the vector $u_i$. We have then $\sigma_1=H_2\cap \sigma$ and $\sigma_2=H_1\cap\sigma$, hence $\sigma_1$ and $\sigma_2$ are both faces of $\sigma$. In particular, $\sigma_1,\sigma_2\in\Sigma$.

Since $\psi$ preserves cones by hypothesis, we see that $\psi(\sigma),\psi(\sigma_1),\psi(\sigma_2)$ are cones in $\Sigma_1'\times\Sigma_2'$ and it is evident then that $\psi(\sigma)=\psi(\sigma_1)\times\psi(\sigma_2)$ and thus $\sigma=\sigma_1\times\sigma_2$. Since this is true for every full dimensional cone in $\Sigma$, this defines subfans $\Sigma_1,\Sigma_2\subset\Sigma$ such that $\Sigma$ can be seen as a subfan of $\Sigma_1\times\Sigma_2$, which has the same dimension. Since $X$ is complete, we see that $\Sigma=\Sigma_1\times\Sigma_2$, which contradicts the indecomposability of $X$ unless one of the $\Sigma_i$'s is trivial, which is equivalent to $B_i=\emptyset$. This implies the assertion in the statement of the Lemma.
\end{proof}

We are now ready to prove our main theorem.

\begin{proof}[Proof of Theorem \ref{main thm}]
Let $\varphi$ be an automorphism of $X$. Then by Proposition \ref{prop aut0} and Theorem~\ref{thm Demazure}~\eqref{Dem3}, we know that up to composing $\varphi$ with an element in
\[\prod_{i=1}^n(\aut^0_{X_i})^{r_i}=\aut^0_X,\]
we may assume that $\varphi$ is a toric automorphism, i.e.~it corresponds to an automorphism $\psi$ of the fan $\Sigma$ of $X$. Note that such an automorphism preserves $\Sigma(i)$ (in the sense of Lemma \ref{lemma injection}) for every $1\leq i\leq\dim X$.

Denote by $X_{i,j}$ with $1\leq j\leq r_i$ the different copies of $X_i$. Define then, for each $X_{i,j}$, the corresponding subvariety
\[Y_{i,j}:=\{1\}\times\cdots\times X_{i,j}\times\cdots\times\{1\}\subset X\]
Since these are indecomposable toric varieties, Lemma \ref{lemma injection} tells us that $\varphi(Y_{i,j})\subset Y_{i',j'}$ for some $i',j'$. And since $\varphi$ is invertible, we see that $Y_{i,j}\simeq Y_{i',j'}$, hence $X_{i,j}\simeq X_{i',j'}$ and thus $i=i'$. Then, up to composing $\varphi$ with a permutation of isomorphic components (which is a toric morphism), we may assume that $\varphi$ is toric and preserves each $Y_{i,j}$. This in turn implies that $\psi$ preserves each component of $\Sigma$, which is the direct product of fans of the $X_i$'s. And this gives $\varphi\in\prod_{i=1}^n\aut_{X_i}^{r_i}$, which concludes the proof of the theorem.
\end{proof}

\end{document}